\title{The nonexistence of an additive quaternary
$[15,5,9]$-code}
\author{J\"urgen Bierbrauer
\thanks{research partially supported by NSA grant H98230-10-1-0159}\\
Department of Mathematical Sciences\\
Michigan Technological University\\
Houghton, Michigan 49931 (USA)\\ \\
D. Bartoli, G. Faina, S. Marcugini and F. Pambianco
\thanks{partially supported by the Italian Ministero dell'Istruzione, dell'Universit\`a e della Ricerca (MIUR) and by the Gruppo Nazionale per le Strutture Algebriche, Geometriche e le loro Applicazioni (GNSAGA).} \\
Dipartimento di Matematica e Informatica\\
Universit\`a degli Studi di Perugia\\
Perugia (Italy)}
\begin{document}

\maketitle

\theoremstyle{plain}
\newtheorem{Theorem}{Theorem}[section]
\newtheorem{Proposition}[Theorem]{Proposition}
\newtheorem{Lemma}[Theorem]{Lemma}
\newtheorem{Corollary}[Theorem]{Corollary}

{\theoremstyle{definition}
\newtheorem{Definition}[Theorem]{Definition}
\newtheorem{Example}[Theorem]{Example}}

\def\gz{\mathbb{Z}}
\def\rz{\mathbb{R}}
\def\ef{\mathbb{F}}
\def\CC{\mathbb{C}}
\def\o{\omega}
\def\p{\overline{\omega}}
\def\e{\epsilon}
\def\a{\alpha}
\def\b{\beta}
\def\g{\gamma}
\def\d{\delta}
\def\l{\lambda}
\def\s{\sigma}
\def\bsl{\backslash}
\def\la{\longrightarrow}
\def\arr{\rightarrow}
\def\ov{\overline}
\def\sm{\setminus}
\newcommand{\D}{\displaystyle}
\newcommand{\T}{\textstyle}
\newcommand{\PG}{\mathrm{PG}}

\begin{abstract}
We show that no additive $[15,5,9]_4$-code exists. As a consequence the largest dimension $k$
such that an additive quaternary $[15,k,9]_4$-code exists is $k=4.5.$
\end{abstract}

\section{Introduction}

Additive codes generalize the notion of linear codes, see also
Chapter 17 of~\cite{book}.
Here we concentrate on the quaternary case.

\begin{Definition}
\label{addcodebasicdef}
Let $k$ be such that $2k$ is a positive integer.
An additive quaternary $[n,k]_4$-code ${\cal C}$ (length $n,$
dimension $k$) is a $2k$-dimensional subspace of $\ef_2^{2n},$
where the coordinates come in pairs of two.
We view the codewords as $n$-tuples where the coordinate entries are elements of $\ef_2^2.$

A {\em generator matrix} $G$ of ${\cal C}$ is a binary $(2k,2n)$-matrix whose
rows form a basis of the binary vector space ${\cal C}.$
\end{Definition}

Our main result is the following.

\begin{Theorem}
\label{nonexisttheorem}
There is no additive $[15,5,9]_4$-code.
\end{Theorem}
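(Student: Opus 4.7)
The plan is to reformulate the existence of a $[15,5,9]_4$-additive code as a purely combinatorial/geometric question and then derive a contradiction. Using Definition~\ref{addcodebasicdef}, a generator matrix $G$ of such a hypothetical code gives a multiset of $15$ subspaces $L_1,\ldots,L_{15}\subseteq\ef_2^{10}$ of dimension at most $2$ (the column-pair spans); for nonzero $v\in\ef_2^{10}$ the weight of $vG$ equals $15-|\{i:L_i\subseteq v^{\perp}\}|$. Hence the condition $d\ge 9$ is equivalent to: every hyperplane of $\ef_2^{10}$ contains at most $6$ of the $L_i$.

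A first reduction should show that one may assume all $L_i$ are genuine $2$-dimensional subspaces (projective lines of $\PG(9,2)$) and that they collectively span $\ef_2^{10}$. A $0$- or $1$-dimensional $L_i$, or a non-spanning configuration, would either produce a codeword of weight less than $9$ or allow shortening to an additive code of length $<15$ that is already known not to reach dimension $5$ and distance $9$.

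Next I would invoke the MacWilliams identities for additive quaternary codes. With $A_0=1$, $A_1=\dots=A_8=0$ and $\sum_{w\ge 9}A_w=1023$, non-negativity and integrality of the dual weights $A'_w$ (together with any a priori lower bound on the dual distance, since $\mathcal{C}^{\perp}$ is an additive $[15,10]_4$-code) typically restrict the admissible distributions $(A_9,\ldots,A_{15})$ to a short list. These weight constraints interact with the double counts $\sum_i i\,t_i = 15\cdot 255 = 3825$ and $\sum_i \binom{i}{2}t_i$ for the numbers $t_i$ of hyperplanes meeting exactly $i$ of the $L_j$, providing additional numerical information about the incidence structure of the line configuration.

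The decisive step is to rule out the configurations surviving these linear constraints. The most natural finishing strategy is to fix a minimum weight codeword $c$ of weight $9$ and analyse the residual obtained by shortening on the $9$-element support of $c$. This produces a small additive code whose parameters and structure can either be contradicted by previously known non-existence bounds or fed back into the global geometry to force some hyperplane meeting at least $7$ of the $L_i$. The technical heart of the argument---and its main obstacle---is this case analysis, which is likely to require a finite enumeration, possibly computer-assisted, over the short list of residual configurations.
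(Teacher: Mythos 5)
Your opening reduction is sound and coincides with the paper's: the hyperplane reformulation is exactly Theorem~\ref{geomnonexisttheorem}, and the reduction to a genuine set of $15$ spanning lines is the paper's Lemma~\ref{firstbasiclemma} (which also pins down the strength as exactly $3$ via the nonexistence of $[13,4,9]_4$, $[12,2.5,9]_4$ and $[12,7,5]_4$). After that point, however, your proposal stops being a proof and becomes a research plan: the phrases ``typically restrict \dots to a short list,'' ``can either be contradicted \dots or fed back,'' and ``is likely to require a finite enumeration'' each mark a step that is asserted rather than carried out. The two specific tools you name for the decisive step are not strong enough. The MacWilliams/linear-programming constraints do not exclude these parameters --- if they did, no geometric case analysis would be needed, whereas the actual proof requires a delicate structure theory plus roughly $250$ days of exhaustive search. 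And the residual/shortening idea applied to a weight-$9$ codeword leaves a length-$6$ code of dimension about $4$ and designed distance about $\lceil 9/4\rceil=3$; a $[6,4,2]_4$ (indeed $[6,4,3]_4$ is borderline) presents no contradiction, so nothing is ruled out this way.

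What the paper actually does in place of your ``decisive step'' is quite different and is the entire content of the argument: it locates a $10$-$V_6$ containing three codelines and an isolated codepoint, passes to the factor space $\PG(3,2)$, develops a weight function on its points, lines and planes (Lemmas~\ref{10V6lemma}--\ref{heavylemma}), shows every plane has odd weight in $\{11,13,15,17\}$, splits into three cases accordingly, enumerates the finitely many admissible line systems in $\PG(3,2)$ ($12$, $153$ and $113$ solutions respectively), and then verifies by computer that none of the resulting partial generator matrices completes to a $[15,5,9]_4$-code. Without an analogue of this factor-space analysis --- or some other concrete, finite, and provably exhaustive case decomposition --- your proposal does not establish the theorem.
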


The theory of additive codes is a natural and far-reaching generalization of the classical
theory of linear codes. The classical theory of cyclic and constacyclic linear codes
has been generalized to additive codes in~\cite{addcyclic12}. In the sequel we restrict to the
case of quaternary additive codes and write $[n,k,d]$ for $[n,k,d]_4.$
The quaternary case is of special interest, among others because of a close link to the
theory of quantum stabilizer codes and their geometric representations, see~\cite{quantgeom,DHY,BMP,BBEFMP,BBMP}.
The determination of the optimal parameters of additive quaternary codes of short length
was initiated by Blokhuis and Brouwer~\cite{BB}. In~\cite{IEEEaddZ4} we determine the optimal
parameters for all lengths $n\leq 13$ except in one case. The last gap was closed by
Danielsen-Parker~\cite{DP} who constructed two cyclic $[13,6.5,6]$-codes.
Let us concentrate on lengths $n=14$ and $n=15$ now.
A cyclic $[15,4.5,9]$-code was constructed in~\cite{JBMadrid}, see also~\cite{addcyclic12}.
Together with Theorem~\ref{nonexisttheorem} some more optimal parameters are determined.
The following table collects this information.

$$\left.\begin{array}{c||c|c|c|c|c|c|c|c|c|c|c|c|c|c|c}
k\bsl n & 1 & 2 & 3 & 4 & 5 & 6 & 7 & 8 & 9 & 10 & 11 & 12 & 13 & 14 & 15 \\
\hline
1       & 1 & 2 & 3 & 4 & 5 & 6 & 7 & 8 & 9 & 10 & 11 & 12 & 13 & 14 & 15 \\
1.5     &   & 1 & 2 & 3 & 4 & 5 & 6 & 6 & 7 & 8  & 9  & 10 & 11 & 12 & 12 \\
2       &   & 1 & 2 & 3 & 4 & 4 & 5 & 6 & 7 & 8  & 8  & 9  & 10 & 11 & 12 \\
2.5     &   &   & 1 & 2 & 3 & 4 & 5 & 6 & 6 & 7  & 8  & 8  & 9  & 10 & 11  \\
3       &   &   & 1 & 2 & 3 & 4 & 4 & 5 & 6 & 6  & 7  & 8  & 9  & 10 & 11  \\
3.5     &   &   &   & 1 & 2 & 3 & 4 & 4 & 5 & 6  & 7  & 8  & 8  & 9  & 10 \\
4       &   &   &   & 1 & 2 & 2 & 3 & 4 & 5 & 6  & 6  & 7  & 8  & 9  & 10 \\
4.5     &   &   &   &   & 1 & 2 & 3 & 3 & 4 & 5  & 6  & 6  & 7  & 8  & 9  \\
5       &   &   &   &   & 1 & 2 & 2 & 3 & 4 & 5  & 6  & 6  & 7  & 8  & 8  \\
5.5     &   &   &   &   &   & 1 & 2 & 3 & 3 & 4  & 5  & 6  & 6  & 7  & 8  \\
6       &   &   &   &   &   & 1 & 2 & 2 & 3 & 4  & 5  & 6  & 6  & 7  & 8  \\
6.5     &   &   &   &   &   &   & 1 & 2 & 3 & 3  & 4  & 5  & 6  & 6  & 7  \\
7       &   &   &   &   &   &   & 1 & 2 & 2 & 3  & 4  & 4  & 5  & 6  & 7  \\
7.5     &   &   &   &   &   &   &   & 1 & 2 & 2  & 3  & 4  & 5  & 5-6& 6  \\
8       &   &   &   &   &   &   &   & 1 & 2 & 2  & 3  & 4  & 4  & 5  & 6  \\
8.5     &   &   &   &   &   &   &   &   & 1 & 2  & 2  & 3  & 4  &4-5 & 5-6 \\
9       &   &   &   &   &   &   &   &   & 1 & 2  & 2  & 3  & 4  & 4  & 5  \\
9.5     &   &   &   &   &   &   &   &   &   & 1  & 2  & 2  & 3  & 4  & 4-5 \\
10      &   &   &   &   &   &   &   &   &   & 1  & 2  & 2  & 3  & 4  & 4   \\
10.5,11 &   &   &   &   &   &   &   &   &   &    & 1  & 2  & 2  & 3  & 4  \\
11.5,12 &   &   &   &   &   &   &   &   &   &    &    & 1  & 2  & 2  & 3  \\
 \end{array}\right.$$

In order to understand the entries in the two last columns corresponding to
$n=14$ and $n=15,$ observe that an $[n,k,d]$-code implies an
$[n-1,k,d-1]$-code by puncturing and an $[n-1,k-1,d]$-code by shortening.
Together with the knowledge of the optimal parameters of linear quaternary
codes of short length (see for example~\cite{Grassl}), this determines most of
the entries. As an example consider the case of dimension $k=4.$ It is known
that $d=8$ is the optimal distance in case $n=13,k=4.$ This shows that
$[14,4,d_1]$ and $[15,4,d_2]$ have $d_1\leq 9$ and $d_2\leq 10.$ As a linear
quaternary $[15,4,10]$-code exists (it is derivable from the $[17,4,12]$-code
determined by the elliptic quadric) it follows that $d_1=9, d_2=10$ are the
optimal minimum distances in those cases.

\begin{Corollary}
\label{nonexistcor}
The maximum dimension of an additive quaternary $[15,k,9]$-code is
$k=4.5.$
\end{Corollary}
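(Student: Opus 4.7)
The plan is to derive the corollary quickly from Theorem~\ref{nonexisttheorem} together with the cyclic $[15,4.5,9]$-code known from~\cite{JBMadrid}. The construction of the latter, already cited in the discussion above, supplies the lower bound $k \geq 4.5$ with no further work, so the entire content of the corollary is the matching upper bound $k \leq 4.5$.

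For that upper bound the plan is a standard subcode argument. Suppose an additive $[15,k,9]$-code ${\cal C}$ exists with $k > 4.5$. Since $2k$ must be a positive integer, this forces $k \geq 5$, so ${\cal C}$ is an $\ef_2$-subspace of $\ef_2^{30}$ of binary dimension at least $10$. I would then select any $10$-dimensional binary subspace ${\cal C}' \subseteq {\cal C}$. By Definition~\ref{addcodebasicdef}, ${\cal C}'$ is an additive quaternary code of length $15$ and dimension $5$; and since every nonzero codeword of ${\cal C}'$ is also a nonzero codeword of ${\cal C}$, it has Hamming weight at least $9$. Hence ${\cal C}'$ is a $[15,5,9]$-code, contradicting Theorem~\ref{nonexisttheorem}.

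I do not anticipate any real obstacle: the two facts invoked — that any $\ef_2$-subspace of an additive code is again an additive code of the same length, and that passing to a subcode cannot decrease the minimum distance — are both immediate from Definition~\ref{addcodebasicdef}. The corollary therefore follows at once from the main theorem, with the existence half supplied by the construction of~\cite{JBMadrid}.
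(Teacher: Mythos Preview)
Your proposal is correct and matches the paper's own argument: the corollary is deduced immediately from Theorem~\ref{nonexisttheorem} together with the existence of the cyclic $[15,4.5,9]$-code from~\cite{JBMadrid}. Your explicit subcode reduction from $k>4.5$ to $k=5$ just spells out the obvious step the paper leaves implicit.
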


This follows from Theorem~\ref{nonexisttheorem} and the existence of a
$[15,4.5,9]$-code. This cyclic code is a direct sum $C=C_0\oplus C_1\oplus
C_2$ of codes of binary dimensions $1,4,4$. Here, $C_0$ is
generated by $(11)^{15}.$ Let $\ef_{16}=\ef_2(\e )$ where $\e^4=\e +1,$ let
$T:\ef_{16}\la\ef_2$ be the trace and index the coordinates of $C$ by $\e^i,
i=0,\dots ,14.$ The $16$ codewords of $C_1$ are indexed by $u\in\ef_{16},$
with entry $(T(u\e^{i+1}),T(u\e^i))$ in coordinate $\e^i.$ Likewise, the
codewords of $C_2$ are indexed by $u\in\ef_{16},$ with entry
$(T(u\e^{3i}),T(u\e^{3i+2}))$ in coordinate $\e^i.$ This code has strength
$3.$ Its automorphism group is the cyclic group of order $15.$

In the proof of Theorem~\ref{nonexisttheorem}
we are going to use the information contained in the table
with the exception of the non-existence of $[15,5,9]$ of course.
In geometric language Theorem~\ref{nonexisttheorem} is equivalent to the
following:

\begin{Theorem}
\label{geomnonexisttheorem}
There is no multiset of $15$ lines in $\PG(9,2)$ such that
no more than $6$ of those lines are in a hyperplane.
\end{Theorem}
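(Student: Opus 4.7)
The plan is to suppose for contradiction that such a multiset $\mathcal{L}$ of $15$ lines in $\PG(9,2)$ exists, and to derive a contradiction by combining the structural information obtained by shortening the putative $[15,5,9]_4$-code with the counting identities expressing $\sum_H \binom{a_H}{j}$ in terms of span-types of $j$-subsets of $\mathcal{L}$; here $a_H$ denotes the number of lines of $\mathcal{L}$ contained in the hyperplane $H$, so by hypothesis $a_H \leq 6$ for every $H$.

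First I would run through shortenings of low order. Shortening the code at coordinates $i_1, \ldots, i_r$ yields an additive code of length $15-r$, binary dimension $10 - \dim(\ell_{i_1}+\cdots+\ell_{i_r})$, and minimum distance at least $9$. Matching these parameters against the table in the introduction --- in particular the nonexistence of $[13,3.5,9]$, $[12,3,9]$, $[12,2.5,9]$, $[10,2,9]$, $[10,1.5,9]$ and $[8,1,9]$ --- yields in turn: (a) the $15$ lines are distinct and pairwise skew; (b) every three of them span a $6$-dimensional subspace; (c) every five span at least an $8$-dimensional subspace; (d) every seven span at least a $9$-dimensional subspace.

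Second, I would turn these into numerical constraints. Let $\lambda_i := \#\{H : a_H = i\}$ and $S_j := \sum_H \binom{a_H}{j}$. Flag counting gives $S_j = \sum_T (2^{10-d(T)}-1)$, summed over $j$-subsets $T \subset \mathcal{L}$ of span-dimension $d(T)$. By (a)--(b), for $j \leq 3$ one has $d(T) = 2j$ identically, producing the explicit values $S_0 = 1023$, $S_1 = 3825$, $S_2 = 6615$, $S_3 = 6825$, which together with $a_H \in \{0,\ldots,6\}$ cut the profile $(\lambda_0, \ldots, \lambda_6)$ down to a small polytope. For $j \in \{4,5,6\}$, conditions (c)--(d) restrict the admissible values of $d(T)$, forcing each $S_j$ into a specific residue class modulo small integers and giving further linear relations between the $\lambda_i$ and the span-type counts $n_d^{(j)} := \#\{T : |T|=j,\ d(T)=d\}$.

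Finally, I would close the argument by cross-checking all the constraints. Every LP-feasible profile surviving the $j \leq 3$ analysis must also be compatible with the refined $j = 4,5,6$ identities: the nonnegativity of the $n_d^{(j)}$, the modular conditions they impose on $S_j$, and the linear dependencies between $n_\bullet^{(j)}$ and $n_\bullet^{(j+1)}$ obtained by restricting each $(j+1)$-subset to its $j$-subsets must jointly exclude every candidate. I expect this final combinatorial elimination to be the main obstacle: the shortening conditions are strong but leave a small family of numerical profiles, and ruling them all out should require a careful coupling of the span-type distributions across $j$, most likely with some computer assistance.
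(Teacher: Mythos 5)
Your opening moves are sound and agree with the paper's Lemma~2.3 and its surroundings: shortening against the known table does show that the $15$ lines are distinct, pairwise skew, and that any three are in general position, and your moment identities $S_0=1023$, $S_1=3825$, $S_2=6615$, $S_3=6825$ are correct. But note what these identities are: writing $\lambda_i$ for the number of hyperplanes containing exactly $i$ lines, a hyperplane $H=x^{\perp}$ corresponds to the codeword $xG$ of weight $15-a_H$, so your four equations for $j\le 3$ are exactly the Pless power moments expressing that $C$ is a $[15,5]_4$ additive code with $d\ge 9$ and dual distance $\ge 4$. In other words, the $j\le 3$ part of your argument is the linear programming bound, and it demonstrably does not exclude these parameters --- if it did, the elaborate analysis of this paper (and its 250 days of computation) would be pointless. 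So everything rests on your third step, and that step is asserted rather than proved: you never exhibit a contradiction, and you yourself flag the ``final combinatorial elimination'' as the main obstacle. The higher-moment relations you propose are weak because the span-type counts $n_d^{(j)}$ are free parameters constrained only by nonnegativity, crude upper bounds, and double-counting inequalities; for instance $S_4=4095+4\,n_7^{(4)}$ with $0\le n_7^{(4)}\le 1365$ merely confines $S_4$ to a residue class and an interval. There is no indication that these conditions, even jointly, empty the feasible polytope, and the existence of a $[15,4.5,9]$ code (whose lines satisfy all the same local span conditions) is a warning that purely numerical profiles of this kind are very likely realizable.

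The paper takes a different and essentially unavoidable route: after establishing strength exactly $3$, it locates a $10$-point $V_6$ containing three codelines and an isolated point, passes to the factor space $\PG(3,2)$, classifies the induced weight function on points and planes into three cases (a plane of weight $11$, a plane of weight $13$, or all planes of weight $15$ or $17$), enumerates the finitely many admissible systems of $11$ lines in $\PG(3,2)$ in each case, and then exhaustively verifies by computer that none of the resulting partial generator matrices extends to a $[15,5,9]_4$ code. If you want to salvage your approach you would need to either carry out the profile elimination explicitly (and be prepared for it to fail) or supplement the counting with structural information of the kind the paper extracts from the $V_6$ descent. As it stands, the proposal has a genuine gap precisely at the point where the real work of the theorem lies.
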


We are going to prove Theorem~\ref{geomnonexisttheorem} in the following
sections, using geometric arguments and an exhaustive computer search.
Most of the geometric work is done in $\PG(9,2),$ a suitably chosen
subspace $\PG(5,2)$ and its factor space $\PG(3,2).$
Total computing time was 250 days using a 3.2 Ghz Intel Exacore.

\section{Some basic facts}
\label{basicgeomsection}

Let ${\cal M}$ be a multiset of $15$ lines in $\PG(9,2)$ with the property that no
hyperplane of $\PG(9,2)$ contains more than $6$ of those lines, in the multiset sense.
We will write $V_n$ for the subspaces $\PG(n-1,2)$ in our $\PG(9,2).$

\begin{Definition}
\label{strengthdef}
A set of $t$ lines is in {\em general position} if the space they generate has projective
dimension $2t-1.$
A set of lines has {\em strength} $t$ if $t$ is maximal such that any $t$ of the lines
are in general position.
\end{Definition}

The coding-theoretic meaning of the strength is the following.

\begin{Proposition}
\label{strengthdualprop}
If $C$ is an $[n,k]_4$ additive quaternary code geometrically described
by a set $L$ of $n$ lines in $\PG(2k-1,2),$ then the minimum distance of the dual
code $C^{\perp}$ equals $t+1,$ where $t$ is the strength of $L.$
\end{Proposition}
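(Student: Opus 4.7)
The plan is to identify $C^{\perp}$ with the kernel of a generator matrix $G$ of $C$ viewed as a $2k\times 2n$ binary matrix, and then recognize that both the quaternary weight of codewords of $C^{\perp}$ and the strength of $L$ translate into the same statements about linear dependencies among blocks of columns of $G$.

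I would begin by writing $G=[G_1\mid G_2\mid\cdots\mid G_n]$, where each $G_i$ is a $2k\times 2$ block whose two columns form a basis of the $2$-dimensional binary subspace corresponding to the line $\ell_i\in L$. Under the trace-Hermitian form that defines the quaternary dual, a codeword of $C^{\perp}$ is identified with a vector $y=(y_1,\dots,y_n)\in(\ef_2^2)^n$ subject to a linear condition $\sum_i G_i y_i=0$; any two nondegenerate bilinear forms on $\ef_2^{2n}$ that respect the pair structure differ by an invertible transformation within each pair, so this identification may be taken with the standard binary inner product, and the quaternary weight of $y$, which is $|\{i:y_i\neq 0\}|$, is unaffected.

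Next I would record the geometric dictionary: for any subset $S\subseteq\{1,\dots,n\}$, the lines $\{\ell_i:i\in S\}$ are in general position precisely when their projective span has dimension $2|S|-1$, equivalently when the $2|S|$ columns of the blocks $G_i$, $i\in S$, are linearly independent over $\ef_2$. I would then prove both inequalities. If the strength is at least $t$, then every $t$-subset of blocks spans a $2t$-dimensional space, so no nonzero $y$ supported on at most $t$ positions can satisfy $\sum_i G_iy_i=0$, giving $d(C^{\perp})\geq t+1$. Conversely, if the strength equals $t$, some $t+1$ lines fail general position, which produces a nontrivial relation $\sum_{i\in S}G_iy_i=0$ with $|S|=t+1$; this gives a nonzero codeword of $C^{\perp}$ of weight at most $t+1$, and combined with the previous inequality its weight equals $t+1$. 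Hence $d(C^{\perp})=t+1$.

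There is no substantial obstacle: the argument is a translation of definitions. The one point worth watching is that the relevant weight on $C^{\perp}$ is the quaternary weight (number of nonzero coordinate pairs) rather than the binary Hamming weight, which is what makes the notion of strength---a condition on pairs of columns, not single columns---match the dual distance exactly.
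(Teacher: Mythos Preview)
Your argument is correct. The paper itself states Proposition~\ref{strengthdualprop} without proof, treating it as a standard fact about the geometric description of additive quaternary codes; so there is no ``paper's own proof'' to compare against. Your write-up supplies exactly the folklore argument: the columns of $G$ come in $n$ pairs representing the codelines, a word of $C^{\perp}$ is a binary relation among these column pairs, and its quaternary weight counts how many pairs participate nontrivially, which is precisely the size of a dependent subfamily of lines. Your remark that the choice of nondegenerate pair-respecting form (trace-Hermitian versus ordinary binary) only reparametrizes each $y_i$ by an invertible $2\times 2$ matrix, hence does not change the support, is the one point that needs saying and you said it. Nothing is missing.
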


\begin{Lemma}
\label{firstbasiclemma}
The multiset ${\cal M}$ is a set and has strength $3.$
\end{Lemma}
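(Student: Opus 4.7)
The plan is to combine the dual-distance characterization of strength from Proposition~\ref{strengthdualprop} with systematic shortening of $C$, so that each putative failure of strength~$3$ produces an additive code whose parameters contradict the bounds tabulated above.

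For the upper bound, I would argue as follows. If the strength of $\mathcal{M}$ were at least~$4$, then by Proposition~\ref{strengthdualprop} the dual code $C^{\perp}$ would be an additive $[15,10,d^{\perp}]$-code with $d^{\perp}\geq 5$. The entry of the table at $(n,k)=(15,10)$ equals~$4$, so no such code exists, forcing the strength to be at most~$3$.

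For the lower bound, which simultaneously shows that $\mathcal{M}$ is a set (two coinciding lines span a space of projective dimension~$1$ and hence already violate strength~$2$), I would suppose for contradiction that some $t$ lines of $\mathcal{M}$ span a subspace of projective dimension $p<2t-1$, where $t\in\{2,3\}$. Shortening $C$ at these $t$ coordinates, i.e.\ passing to the subcode where these coordinates vanish and then deleting them, yields an additive code of length $15-t$, binary dimension $10-(p+1)=9-p$, and minimum distance at least~$9$. For $t=2$ we have $p\leq 2$, so this is a $[13,k']$-code with $k'\geq 3.5$ and $d\geq 9$; the table entries at $n=13$ for $k'\in\{3.5,4\}$ are both~$8$. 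For $t=3$ we have $p\leq 4$, yielding a $[12,k']$-code with $k'\geq 2.5$ and $d\geq 9$; all table entries at $n=12$ for $k'\geq 2.5$ are at most~$8$. Every case contradicts the table, so the strength of $\mathcal{M}$ is at least~$3$.

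The step I expect to need the most care is the bookkeeping of the shortened object. The binary dimension count $9-p$ rests on the duality identity $\mathrm{rank}\,\Pi=\dim\langle\ell_{i_1},\ldots,\ell_{i_t}\rangle=p+1$ for the joint projection $\Pi$ onto the $t$ chosen coordinates, and the minimum-distance bound $\geq 9$ comes from the fact that shortening never decreases the number of nonzero $\ef_4$-coordinates of a codeword. A minor nuisance is that after shortening, some of the remaining $15-t$ coordinates may project to subspaces of binary dimension less than~$2$; but this only means that the shortened code is an additive code of the stated length and dimension whose geometric "lines" happen to be degenerate, and the tabulated upper bounds on $d$ still apply.
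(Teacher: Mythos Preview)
Your argument is correct and structurally identical to the paper's: shortening at $t\in\{2,3\}$ coordinates for the lower bound on the strength, and Proposition~\ref{strengthdualprop} applied to $C^{\perp}$ for the upper bound. The only difference is in how the needed nonexistence results are justified. You invoke the table entries directly (which the paper explicitly permits), whereas the paper argues two of them independently: for the lower bound it concatenates the putative $[13,4,9]_4$- and $[12,2.5,9]_4$-codes with the binary $[3,2,2]_2$-code to obtain $[39,8,18]_2$ and $[36,5,18]_2$, each violating the Griesmer bound; for the upper bound it shortens the putative $[15,10,5]_4$ to a $[12,7,5]_4$ and cites~\cite{IEEEaddZ4}. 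Your route is more uniform, while the paper's is slightly more self-contained for the lower bound; neither buys anything essential over the other here.
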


\begin{proof}
Suppose ${\cal M}$ is not a set. This means that some two of its members
are identical; say $L_1=L_2.$ Consider the subcode consisting of the codewords
with entries $0$ in the positions of $L_1$ and $L_2.$ This yields a
$[13,4,9]_4$-code which is known not to exist.
In fact, concatenation of a $[13,4,9]_4$-code with the $[3,2,2]_2$-code
would produce a binary linear $[39,8,18]_2$-code, which contradicts the Griesmer bound.

Now, suppose that some three lines $L_1,L_2,L_3$ of ${\cal M}$ are not in general position.
The subcode with entries $0$ in those three coordinate positions has length
$12,$ dimension $\geq 5-2.5=2.5$ and minimum distance $\geq 9.$
Such a code does not exist. In fact, concatenation yields a binary linear
$[36,5,18]_2$-code, which again contradicts the Griesmer bound.
It follows that the strength is $\geq 3.$
Assume it is $\geq 4.$ Then the dual of the code $C$ described by ${\cal M}$
is a $[15,10,5]_4$-code by Proposition~\ref{strengthdualprop}.
Such a code does not exist. In fact, shortening results in a $[12,7,5]_4$-code
whose nonexistence has been shown in~\cite{IEEEaddZ4}.
It follows that the strength of ${\cal M}$ is precisely $3.$
\end{proof}

\begin{Definition}
\label{codepointdef}
\begin{enumerate}[\rm(1)]
\item The lines of ${\cal M}$ are called {\em codelines}.
\item A point in $\PG(9,2)$ is a {\em codepoint} if it is on some codeline.
\item If $M$ is the set of $45$ codepoints and
 $U$ be a subspace, then  the {\em weight} of $U$ is
 $w(U)=\vert U\cap M\vert .$
\item A subspace $V_i$ of weight $m$ is also called
an $m-V_i.$
\end{enumerate}
\end{Definition}

\begin{Lemma}
\label{secondbasiclemma}
\begin{enumerate}[\rm(1)]
\item There are at most
$27$ codepoints on a hyperplane$,\ 18$ points on a $V_8,$
$13$ points on a $V_7,$ $10$ points on a $V_6$ and $8$ points
on a $V_5.$
\item
All these upper bounds can be attained.

\item Each $18-V_8$ is contained in three $27$-hyperplanes.

\item There is a $10-V_6$ containing three
codelines and an isolated point.
\end{enumerate}

\end{Lemma}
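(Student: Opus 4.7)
My strategy has two halves: first I bound the weights of subspaces in $(1)$ by passing from the hyperplane hypothesis down through $V_8,V_7,V_6,V_5$; then I chain these bounds backwards to exhibit extremal subspaces at each level, launching the chain from the strength-exactly-$3$ information of Lemma~\ref{firstbasiclemma} at the bottom and closing it at the top with $(3)$.

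For $(1)$ the $V_9$ bound is immediate: a hyperplane containing $\ell$ codelines has weight $3\ell+(15-\ell)=15+2\ell\le 27$. For $V_8$ I classify each codeline $L$ by its image in $V_{10}/V_8\cong\ef_2^2$: let $a$ be the number of lines contained in $V_8$, and for each of the three hyperplanes $H_1,H_2,H_3$ through $V_8$ let $b_i$ be the number of lines lying in $H_i$ but not in $V_8$. Every line in $H_i$ is either in $V_8$ or contributes to $b_i$, so $a+b_i\le 6$. Since a line in $V_8$ contributes $3$ codepoints and a line meeting $V_8$ in a single point contributes $1$, one gets $w(V_8)=3a+b_1+b_2+b_3=(a+b_1)+(a+b_2)+(a+b_3)\le 18$, which simultaneously proves the $V_8$ bound and $(3)$: equality forces $a+b_i=6$ for each $i$, making each $H_i$ a $27$-hyperplane. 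For the lower-dimensional subspaces I iterate the counting identity $\sum_{V_{i+1}\supset V_i}w(V_{i+1})=(2^{10-i}-2)\,w(V_i)+45$ (every codepoint of $V_i$ lies in all $2^{10-i}-1$ containing $V_{i+1}$, every other codepoint in exactly one), which yields $w(V_7)\le 81/6$, $w(V_6)\le 150/14$ and $w(V_5)\le 265/30$, i.e.\ the stated integer bounds $13,10,8$.

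For $(4)$, since the strength is exactly $3$, some four codelines $L_1,\ldots,L_4$ fail to be in general position and span a $V_j$ with $j\le 7$; the case $j=6$ is excluded because it would force $w(V_6)\ge 12>10$, so $j=7$. By strength $3$, $L_1,L_2,L_3$ already span a $V_6\subset V_7$, and $L_4\subset V_7$ meets this $V_6$ in the single projective point guaranteed by $\dim(L_4\cap V_6)\ge 2+6-7=1$. This produces a $10$-$V_6$ consisting of $L_1,L_2,L_3$ together with one isolated codepoint $p\in L_4$.

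For the remaining attainability claims of $(2)$ I chain back using this $V_6$. Its three $4$-spaces $V_4^{ij}=\langle L_i,L_j\rangle$ pairwise intersect exactly in one of the codelines (e.g.\ $V_4^{12}\cap V_4^{13}=L_1$, since $4+4-6=2$), and their triple intersection is therefore $L_1\cap L_2=\{0\}$, projectively empty; hence $p$ avoids some $V_4^{ij}$, and then $\langle L_i,L_j,p\rangle$ is a $V_5$ containing $L_i,L_j,p$ and meeting the third codeline $L_k$ in one further point (again $2+5-6=1$), giving an $8$-$V_5$. Applying the averaging identity to the $10$-$V_6$ yields $\sum w(V_7)=185>12\cdot 15$, so some containing $V_7$ is a $13$-$V_7$; applying it once more yields $\sum w(V_8)=123>17\cdot 7$, so some containing $V_8$ is an $18$-$V_8$, which by $(3)$ lies in three $27$-hyperplanes. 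The main technical point is the $V_8$ bookkeeping, which simultaneously delivers the $V_8$ bound, claim~$(3)$, and the induction base for the averaging used at lower dimensions.
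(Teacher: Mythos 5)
Your proof is correct and follows essentially the same route as the paper: the bounds in (1) come from double counting over the pencil of subspaces one dimension up, and attainability is chained upward from the four-lines-in-a-$V_7$ configuration forced by strength exactly $3$. The only divergences are cosmetic — you count codelines rather than codepoints in the $V_8$ step (either version yields (3) from the equality case), and you obtain the attainability of the $27$-hyperplane bound from the chain together with (3), where the paper instead invokes the nonexistence of a $[15,5,10]_4$-code.
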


\begin{proof}
Let $H$ be a hyperplane. Each codeline meets $H$ either
in $3$ points or in one point. Because of the defining
condition, $w(H)\leq 6\times 3+9\times 1=27.$
This bound is reached with equality, as otherwise ${\cal M}$
would describe a $[15,5,10]_4$-code, which does not exist.

Let $S$ be a $V_8$ and $w(S)=i.$
The distribution of the codepoints on the hyperplanes containing
$S$ shows that $i+3(27-i)\geq 45,$ which implies $i\leq 18.$ The argument
also implies that in case $i=18$ all hyperplanes containing
$S$ have weight $27.$

The remaining bounds follow from similar elementary counting arguments.
The $V_7$-bound follows from $14+7\times 4=42<45,$ the $V_6$-bound
from $11+15\times 2<45$ and the $V_5$-bound from $7+31<45.$

Let us show that all those bounds are reached.
As the strength is $3$ we find some four
lines in a $V_7.$ We know that they are not contained in a $V_6;$
so they generate the $V_7.$
Without loss of generality, these lines can be chosen as
$$\langle v_1,v_2\rangle ,~\langle v_3,v_4\rangle ,~\langle
v_5,v_6\rangle ,~\langle v_7,v_1+v_3+v_5\rangle .$$
This shows that
$\langle v_1,\dots ,v_5\rangle$ is an $8-V_5$ and
$\langle v_1,\dots ,v_6\rangle$ is a $10-V_6$ containing three
codelines and an isolated point. This in turn is contained in a
$13-V_7$ and so on.
\end{proof}

A more accurate count can be made using geometric arguments in factor spaces.

\begin{Definition}
\label{factorweightdef}
Let $U$ be a subspace and $\Pi_U$ the factor space.
If $U$ is a $V_i,$ then $\Pi_U$ is a $PG(9-i,2).$
Let $P\in\Pi_U$ be a point and $\langle U,P\rangle$
the $V_{i+1}$ whose factor space mod $U$ is the point $P.$
Define $w(P)=w(\langle U,P\rangle )-w(U).$
For each subspace $X\subseteq\Pi_U$ define $w(X)=\sum_{P\in X}w(P).$
\end{Definition}

Observe that the weight $w(P)$ equals the number of codepoints
which are in the preimage of $P$ mod $U$ but not in $U.$
In particular, $$\sum_{P\in\Pi_U}w(P)=45-w(U).$$

\begin{Lemma}
\label{13V7lemma}
Let $U_7$ be a $13-V_7$ and $\Pi_7$ the factor space (a Fano plane).
Then $\Pi_7$ has three collinear points of weight $4,$ the remaining
four points having weight $5.$
\end{Lemma}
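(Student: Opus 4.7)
The plan is to constrain the weight distribution on the seven points of the Fano plane $\Pi_7$ using two inequalities from Lemma~\ref{secondbasiclemma}. First, each point $P\in\Pi_7$ lifts to a $V_8$ containing $U_7$, so $w(P)=w(\langle U_7,P\rangle)-13\leq 18-13=5$. Second, each line $\ell$ of $\Pi_7$ lifts to a hyperplane of $\PG(9,2)$, so $w(\ell)\leq 27-13=14$. Combined with $\sum_{P\in\Pi_7}w(P)=45-13=32$, the total deficit from the maximum $7\cdot 5=35$ is exactly $3$, so the multiset of point-weights must be one of
$$\{5,5,5,5,4,4,4\},\qquad \{5,5,5,5,5,4,3\},\qquad \{5,5,5,5,5,5,2\}.$$

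I would eliminate the second and third multisets by exhibiting a Fano line of weight $15$. If the multiset is $\{5^6,2\}$, then any of the four Fano lines not through the weight-$2$ point (each point lies on exactly $3$ of the $7$ lines) has weight $5+5+5=15>14$. If the multiset is $\{5^5,4,3\}$, fix the weight-$3$ point $A$ and the weight-$4$ point $B$: the Fano plane contains one line through both and two further lines through each of $A$ and $B$ individually, leaving $7-1-2-2=2$ lines through neither, each of which is a $5+5+5$ line. Both possibilities contradict the bound $w(\ell)\leq 14$.

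In the remaining case $\{5^4,4^3\}$, every Fano line must meet the set $T$ of three weight-$4$ points, since otherwise it would consist of three weight-$5$ points and have weight $15$. So $T$ is a blocking set of $\Pi_7$. If $T$ were collinear, it would \emph{be} a line and thus meet every line of the Fano plane (there are no parallel lines in $\PG(2,2)$), which is consistent. If $T$ were a non-collinear triple, however, a direct check in the Fano plane shows that the complement of any such triangle is itself a line, which would then be disjoint from $T$ and contradict the blocking property. Hence the three weight-$4$ points must be collinear, as claimed.

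The main obstacle is quite modest: one only has to keep careful bookkeeping in the Fano plane to confirm that in each ruled-out case there really is a disjoint line producing weight $15$, and to recall the standard fact that in $\PG(2,2)$ the complement of a non-degenerate triangle is a line. No deeper structure of the $45$ codepoints or of ${\cal M}$ is needed beyond the two elementary bounds supplied by Lemma~\ref{secondbasiclemma}.
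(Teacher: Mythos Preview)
Your argument is essentially the paper's own, unpacked into cases. The paper observes directly that the set $B$ of points of weight $<5$ is a blocking set of size at most $3$ (total deficit $35-32=3$), and since the only $3$-element blocking sets in $\PG(2,2)$ are lines, $B$ is a line of three weight-$4$ points. Your multiset enumeration reproduces the same deductions more explicitly.

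One slip to fix: in your last paragraph you assert that ``the complement of any such triangle is itself a line.'' The complement of a $3$-set in the $7$-point Fano plane has four points, so it cannot be a line. The statement you actually need (and which a direct check does confirm) is that this $4$-point complement \emph{contains} a line --- equivalently, a non-collinear triple in $\PG(2,2)$ is never a blocking set. With that correction your argument goes through.
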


\begin{proof}
By Lemma~\ref{secondbasiclemma},
each point of $\Pi_7$ has weight $\leq 5,$ each line
has weight $\leq 14$ and the sum of all weights is $45-13=32.$
Let $B$ be the set of points of weight $<5$ in $\Pi_7.$
Then $B$ is a blocking set
for the lines and $\vert B\vert\leq 3.$ It follows that $\vert B\vert =3$ and
$B$ is a line. This gives the result.
\end{proof}

\section{The geometric setting}

In Lemma~\ref{secondbasiclemma} we saw that there is a
$10-V_6$ containing $3$ codelines and an isolated point.
We concentrate on such a subspace and its factor space $PG(3,2).$

\begin{Definition}
\label{basicweightdef}
Let $U$ be a $10-V_6$ containing three codelines $L_1,L_2,L_3$ and
an isolated codepoint.
Let $P_0\in\Pi_U$ be the unique point whose preimage contains another
codeline $L_4.$
\par
For each line $l$ of $\Pi_U$ define the {\em $h$-weight} $h(l)$ as
the number of codelines
different from $L_1,\ldots, L_4$ which are in the preimage of $l$ mod $U.$
For each plane $E$, let $h(E)$ be the number of codelines different
from $L_1,\ldots, L_4$ which are contained in the preimage of $E.$
\end{Definition}

\begin{Lemma}
\label{10V6lemma}
\begin{enumerate}[\rm(1)]
\item The points of $\Pi_U$ have weights $\leq 3,$ lines have weights $\leq 8,$
planes have odd weights $\leq 17.$

\item Each $8$-line of $\Pi_U$ is contained in three $17$-planes.

\item Each $17$-plane of $\Pi_U$ contains either precisely $3$ or precisely $4$
points of weight $3.$ There is no point of weight $0$ on a $17$-plane.
\end{enumerate}

\end{Lemma}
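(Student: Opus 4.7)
The plan is to derive all three parts from the factor-space correspondence together with Lemma~\ref{secondbasiclemma} and a short Fano-plane argument. A point, line, or plane of $\Pi_U=\PG(3,2)$ lifts to a $V_7$, $V_8$, or $V_9$ containing $U$, respectively. Since $w(U)=10$, the weight bounds $13$, $18$, $27$ from Lemma~\ref{secondbasiclemma} translate directly to the announced bounds $3$, $8$, $17$. The parity claim in (1) follows because every hyperplane of $\PG(9,2)$ meets each of the $15$ codelines in either $1$ or $3$ points, so $w(V_9)$ is a sum of $15$ odd numbers and therefore odd; subtracting the even integer $w(U)=10$ preserves oddness.

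For (2), an $8$-line $l$ of $\Pi_U$ lifts to an $18$-$V_8$, which by Lemma~\ref{secondbasiclemma}(3) is contained in exactly three $27$-hyperplanes. These correspond to three $17$-planes of $\Pi_U$ through $l$; since every line of $\PG(3,2)$ lies in exactly three planes, these must be \emph{all} the planes through $l$.

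For (3), I would work entirely inside the Fano plane $E$. Let $n_i$ denote the number of points of $E$ of weight $i$, $i=0,1,2,3$. The relations $\sum n_i = 7$ and $\sum i\,n_i = 17$ combine to $3n_0+2n_1+n_2 = 4$, whose only nonnegative integer solutions are
\[
(n_0,n_1,n_2,n_3) \in \{(0,0,4,3),\ (0,1,2,4),\ (0,2,0,5),\ (1,0,1,5)\}.
\]
It then remains to rule out the last two, both of which have $n_3 = 5$. The argument is geometric: if $n_3 = 5$, the two non-weight-$3$ points of $E$ lie on exactly $3+3-1=5$ of the $7$ lines of $E$ (the unique line through both, plus two further lines through each), so the remaining $2$ lines consist entirely of weight-$3$ points and therefore have $\Pi_U$-weight $9$, contradicting the line bound of $8$ from~(1). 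The surviving distributions $(0,0,4,3)$ and $(0,1,2,4)$ give at once the two conclusions of (3): $n_3\in\{3,4\}$ and $n_0=0$.

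The only real obstacle is spotting this Fano-plane line-covering count that forces a line of weight $9$; everything else is routine bookkeeping between $V_{10}$-weights and $\Pi_U$-weights.
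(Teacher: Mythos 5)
Your proof is correct and follows essentially the same route as the paper: parts (1) and (2) translate the bounds of Lemma~\ref{secondbasiclemma} through the factor-space correspondence (with the same parity count $w(H)=3i+(15-i)$ in disguise), and part (3) hinges on the same key fact that five weight-$3$ points in a Fano plane would force a line of weight $9$. Your explicit enumeration of the weight distributions $(n_0,n_1,n_2,n_3)$ is a slightly more systematic bookkeeping that, as a bonus, makes the ``no point of weight $0$'' claim fall out cleanly, whereas the paper leaves that part implicit.
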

\begin{proof} (1) This follows from Lemma~\ref{secondbasiclemma}.

(2) Let $E$ be a plane of $\Pi_U$ and $H$ the preimage of $E.$
Then $H$ is a hyperplane of $PG(9,2).$ If $H$ contains $i\leq 6$ codelines,
then $w(H)=15+2i$ and $w(E)=w(H)-10=5+2i,$ an odd number $\leq 17.$
Lemma~\ref{secondbasiclemma} shows that each $8$-line
of $\Pi_U$ is contained in three $17$-planes.

(3) Let $E$ be a $17$-plane. As $7\times 2=14,$ the plane $E$ contains at least
three points of weight $3.$ Assume it has five such points. Then there is a
line $g$ all of whose points have weight $3.$ This is a line of weight
$9,$ contradiction.
\end{proof}

\begin{Lemma}
\label{10V6secondlemma}
All points of $\Pi_U$ have weights $1,2$ or $3.$
\end{Lemma}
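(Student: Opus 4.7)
Since Lemma~\ref{10V6lemma}(1) already gives $w(P)\le 3$ for every point, the task is to rule out $w(P)=0$. My plan is to force a hypothetical weight-$0$ point into an incompatible global count of codeline--hyperplane incidences among the $15$ hyperplanes of $\PG(9,2)$ containing $U$.

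First I would record how the codelines sit over $U$. Because $L_4\subseteq\langle U,P_0\rangle$ and $L_4\not\subseteq U$, a dimension count gives $\dim(L_4\cap U)=1$: one point of $L_4$ is the isolated codepoint of $U$ and the other two project to $P_0$, so $w(P_0)\ge 2$, and in particular a weight-$0$ point must be different from $P_0$. For the remaining codelines $L_j$ ($j\ge 5$), if $L_j\cap U$ were a point then $L_j$ would project to a single point of $\Pi_U$, which by uniqueness of $P_0$ would have to be $P_0$; but then $L_4$ and $L_j$ together would contribute $4$ codepoints to the coset of $P_0$, contradicting $w(P_0)\le 3$. Hence $L_j\cap U=0$, and $L_j$ projects bijectively onto a line $\ell_j\subseteq\Pi_U$.

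Now suppose $w(P)=0$ for some $P\in\Pi_U$. Each $Q\ne P$ lies on exactly $3$ of the $7$ planes through $P$ and $\sum_Q w(Q)=35$, so
$$\sum_{E\ni P}w(E)=7w(P)+3(35-w(P))=105.$$
By Lemma~\ref{10V6lemma}(3) no $17$-plane contains $P$, so every plane through $P$ has odd weight at most $15$. The identity $105=7\cdot 15$ then forces each of the seven planes through $P$ to have weight exactly $15$; writing $w(E)=2i_E+5$, this means $i_E=5$ for each, where $i_E$ is the number of codelines contained in the preimage hyperplane $H_E$ of $E$.

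The finishing step is a global double count of codeline--hyperplane incidences over the $15$ hyperplanes containing $U$: $L_1,L_2,L_3\subseteq U$ each lie in all $15$; $L_4$ lies in $H_E$ iff $P_0\in E$, hence in $7$; and each $L_j$ with $j\ge 5$ lies in $H_E$ iff $\ell_j\subseteq E$, hence in $3$. This gives
$$\sum_E i_E=3\cdot 15+1\cdot 7+11\cdot 3=85.$$
The seven planes through $P$ contribute $7\cdot 5=35$, so the remaining $8$ planes must contribute $50$; but $i_E\le 6$ for every plane by Lemma~\ref{10V6lemma}(1), capping their total at $48$, a contradiction. The one nonobvious step is locating this identity in the first place: once one sums $i_E$ over all $15$ planes and notices how rigidly the weight-$0$ hypothesis pins $i_E$ on as many as seven of them, the gap $50>48$ is inescapable.
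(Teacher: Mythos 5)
Your proof is correct, and it closes the argument by a genuinely different route than the paper. Both proofs share the opening move: from Lemma~\ref{10V6lemma}(3) a weight-$0$ point $P$ lies on no $17$-plane, and a pencil count (yours over the seven planes through $P$, the paper's over the seven lines through $P$) forces every plane through $P$ to have weight exactly $15$. From there the paper stays entirely local in $\Pi_U$: the seven lines through $P$ all have weight $5$, so the other $14$ points split into seven of weight $3$ and seven of weight $2$, a line $h$ joining two weight-$3$ points has weight $8$, and the plane $\langle P,h\rangle$ has weight $15$, contradicting the fact that every $8$-line lies only in $17$-planes (Lemma~\ref{10V6lemma}(2), via Lemma~\ref{secondbasiclemma}(3)). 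You instead convert ``weight $15$'' into ``exactly $5$ codelines in the preimage hyperplane'' and run a global incidence count $\sum_E i_E=3\cdot 15+7+11\cdot 3=85$ over all $15$ hyperplanes containing $U$, pitting $7\cdot 5=35$ against the cap $8\cdot 6=48$ on the remaining planes. This costs you the extra setup verifying that the codelines $L_j$, $j\ge 5$, meet $U$ trivially and project onto lines of $\Pi_U$ (which is fine: codelines are pairwise skew by strength $3$, so only $L_4$ can pass through the isolated codepoint), but it buys a purely mechanical contradiction $50>48$ that needs only the defining bound of six codelines per hyperplane, and it anticipates the $h$-weight bookkeeping the paper only introduces later in Lemma~\ref{heavylemma} --- your identity $\sum_E i_E=85$ is essentially the statement $\sum_l h(l)=11$ together with $h(E)=(w(E)-11)/2$ or $(w(E)-13)/2$ in disguise. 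A sanity check confirming your count: $\sum_E w(E)=7\cdot 35=245=15\cdot 5+2\cdot 85$.
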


\begin{proof}
Assume that there is a point $P$ of weight $0.$ Let $P\in l.$ As every plane
containing $l$ has weight at most $15$ we must have $w(l)\leq 5.$ The pencil
of lines through $P$ shows that all lines $l$ through $P$ must have weight
$5.$ It follows that there are $7$ points of weight $3$ and $7$ of weight $2.$
Consider the line $h$ through two of the points of weight $3.$ The third point
on that line must have weight $2$ or $3.$ The latter is excluded as $w(h)\leq
8.$ It follows that $w(h)=8.$ The plane generated by $P$ and the line $h$ of
weight $8$ has weight $15$, which contradicts a statement from
Lemma~\ref{secondbasiclemma}.
\end{proof}

\begin{Definition}
\label{xidef}
Let $m_i$ be the number of points of $\Pi_U$ of weight $i.$
\end{Definition}

In particular, $m_1+m_2+m_3=15.$

\begin{Lemma}
\label{w3lemma}
A point $P$ of weight $3$ is contained in one plane of weight $15$ and in
six planes of weight $17.$
It is contained in three lines of weight $7$ and in four lines of weight $8.$
The plane of weight $15$ is the union of $P$ and the lines of weight $7$ through $P.$
\end{Lemma}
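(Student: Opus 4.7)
The plan is to work entirely inside $\Pi_U \cong \PG(3,2)$, doing two weighted double counts through the fixed point $P$ of weight $3$ and then using the admissible plane-weight set $\{5,7,9,11,13,15,17\}$ from Lemma~\ref{10V6lemma} to eliminate all but one configuration.

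First I would compute the sum of the weights of the $7$ planes of $\Pi_U$ through $P$. Any point $Q\neq P$ lies on a unique line with $P$, and each such line lies in $3$ planes of $\PG(3,2)$; meanwhile $P$ itself lies in all $7$ planes. Since $\sum_{Q}w(Q)=45-w(U)=35$, the total is
\[
7\cdot w(P)+3\sum_{Q\neq P}w(Q)=7\cdot 3+3\cdot 32=117.
\]
Since each plane weight is odd and at most $17$, the upper bound for this sum is $7\cdot 17=119$, and the only way to reach $117$ with odd summands $\le 17$ is to have six planes of weight $17$ and one plane of weight $15$. This gives the plane count claimed in the lemma.

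Next I would do the analogous count for the $7$ lines of $\Pi_U$ through $P$: each $Q\neq P$ contributes $w(Q)$ to exactly one such line, so the total line-weight sum is $7\cdot 3+32=53$. By Lemma~\ref{10V6lemma} every line has weight $\le 8$, and by Lemma~\ref{10V6secondlemma} every point has weight in $\{1,2,3\}$, so a line $\ell$ through $P$ has weight $w(\ell)\in\{5,6,7,8\}$. Now a crucial observation: for any line $\ell$ of $\Pi_U$, summing weights over the $3$ planes through $\ell$ gives $3w(\ell)+(35-w(\ell))=2w(\ell)+35$, since the $12$ points outside $\ell$ are partitioned among those planes. Applied to a line through $P$, whose three planes can only have weights $15$ or $17$ by the previous paragraph, this sum must lie in $\{3\cdot 17,\,2\cdot 17+15\}=\{51,49\}$. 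Hence $w(\ell)\in\{7,8\}$; writing $a_7+a_8=7$ and $7a_7+8a_8=53$ yields $a_8=4$, $a_7=3$, as asserted.

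Finally, a weight-$8$ line through $P$ forces its three planes to sum to $51$, i.e.\ all three are $17$-planes, so no $8$-line lies in the unique $15$-plane through $P$. Consequently the three lines through $P$ contained in the $15$-plane are precisely the three $7$-lines through $P$. These three lines contribute $3\cdot 2=6$ points other than $P$, all inside the $15$-plane, which together with $P$ exhaust its $7$ points; hence the $15$-plane is the union of $P$ and the three $7$-lines through $P$. The main obstacle is really just the bookkeeping in the two double counts and the observation that the $15/17$ restriction on planes through $P$ cuts line weights down from $\{5,6,7,8\}$ to $\{7,8\}$; no case analysis beyond this is required.
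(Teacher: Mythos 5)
Your proof is correct. It works in the same arena as the paper's argument --- a weighted double count over the pencil of lines and planes through $P$, using the bounds of Lemma~\ref{10V6lemma} --- but the decomposition is genuinely different and worth noting. The paper proceeds by elimination: it first rules out a plane of weight $13$ through $P$ by showing that one of the remaining four lines of the pencil would then carry weight $\frac{1}{4}\bigl(4\cdot 3+(35-13)\bigr)>8$, then rules out the possibility that all seven planes have weight $17$ via the count $3+7\cdot 5\neq 35$, and only afterwards assembles the distribution. You instead compute the single incidence sum $\sum_{E\ni P}w(E)=7\cdot 3+3\cdot 32=117$ and observe that, since plane weights are odd and at most $17$ by Lemma~\ref{10V6lemma}, the total deficit $7\cdot 17-117=2$ must be concentrated in exactly one plane; this delivers the distribution $\{17^6,15\}$ in one stroke, with no case analysis. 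Your identity $\sum_{E\supset\ell}w(E)=2w(\ell)+35$, which is only implicit in the paper (it underlies the statement that every $8$-line lies in three $17$-planes), then cuts the line weights down to $\{7,8\}$, and the linear system $a_7+a_8=7$, $7a_7+8a_8=53$ finishes the count; the final identification of the $15$-plane as $P$ together with the three $7$-lines is the same in both treatments. The net effect is that your version is shorter and more mechanical, trading the paper's ad hoc eliminations for two clean double counts; the only extra ingredient you lean on is the parity of plane weights, which the paper has already established.
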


\begin{proof}
Consider the pencil of $7$ lines through $P.$ If $P$ is on a plane of weight
$13,$ then the sum of the weights of the points on some three of the lines is
$13.$ As the total weight is $35,$ one of the remaining four lines of the
pencil must have weight $>8,$ contradiction. It follows that the planes
containing $P$ have weights $15$ or $17.$ Assume they all have weight $17.$
Then all lines through $P$ must have weight $8,$ which leads to the
contradiction $35=3+7\times 5.$ It follows that the weights of the points on
some three lines of the pencil add to $15.$ The remaining four lines of the
pencil have weight $8$ each. It follows that all remaining planes containing
$P$ have weight $17.$ Also, any two lines of weight $8$ through $P$ determine
a line of weight $7$ as the third line through $P$ on the same plane (of
weight $17$).
\end{proof}

\begin{Lemma}
\label{heavylemma}
\begin{enumerate}[\rm(1)]
\item
The sum of all $h$-weights of lines is $11.$
\item
The sum of all $h$-weights of lines through $P_0$ is $w(P_0)-2.$
\item
For $P\neq P_0$ the sum of the $h$-weights of lines through $P$
is $w(P).$
\item
Let $E$ be a plane not containing $P_0.$ Then the sum of the $h$-weights of lines in $E$ is $(w(E)-11)/2.$
\item
Let $E$ be a plane containing $P_0.$ Then the sum of the $h$-weights of lines in $E$ is $(w(E)-13)/2.$
\end{enumerate}
\end{Lemma}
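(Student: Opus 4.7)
The plan is to exploit one key structural observation and then reduce everything to a simple preimage count. By strength~$2$ no two codelines share a point; the only codepoint of $U$ outside $L_1,L_2,L_3$ is the isolated one, and this point already lies on $L_4$. Therefore no codeline other than $L_1,L_2,L_3,L_4$ can meet $U$. Each of the remaining eleven codelines $L'$ is thus linearly skew from $U$, so the projection $V_{10}\to V_{10}/U$ restricts to a linear isomorphism on $L'$, giving $L'$ a well-defined image $m_{L'}$ that is a projective line of $\Pi_U$.

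Part~(1) is immediate: a codeline $L'\notin\{L_1,\dots,L_4\}$ lies in the preimage of a line $l\subseteq\Pi_U$ iff $m_{L'}\subseteq l$, and since both are projective lines this forces $m_{L'}=l$. So each of the eleven $L'$ contributes $1$ to exactly one $h(l)$, giving $\sum_l h(l)=11$. The same bijection $L'\mapsto m_{L'}$ also gives, for any plane $E\subseteq\Pi_U$, the identity $\sum_{l\subseteq E}h(l)=\#\{L':m_{L'}\subseteq E\}=h(E)$; this will be used in (4) and (5).

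For (2) and (3) I compute $w(P)$ by listing the codepoints in the $V_7$ preimage of $P\in\Pi_U$. These are: the ten codepoints of $U$; the two codepoints of $L_4$ outside $U$, provided $P=P_0$ (otherwise none, since $L_4$ has image $P_0$); and, for each $L'\notin\{L_1,\dots,L_4\}$, exactly one point whenever $P\in m_{L'}$ and zero otherwise (using that $L'\to m_{L'}$ is bijective). Hence $w(P)=\#\{L':P\in m_{L'}\}$ when $P\neq P_0$, and $w(P_0)=\#\{L':P_0\in m_{L'}\}+2$. Since $\sum_{l\ni P}h(l)=\#\{L':P\in m_{L'}\}$ by the same bijection, both formulas follow.

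For (4) and (5) I use the weight formula recorded in the proof of Lemma~\ref{10V6lemma}: if a hyperplane $H$ of $\PG(9,2)$ contains $i$ codelines then $w(H)=15+2i$. Take $H$ to be the preimage of a plane $E\subseteq\Pi_U$. Then $L_1,L_2,L_3\subseteq U\subseteq H$ always, $L_4\subseteq H$ iff $P_0\in E$, and the remaining codelines contained in $H$ are counted by $h(E)$. Thus $i=3+h(E)$ or $i=4+h(E)$ according as $P_0\notin E$ or $P_0\in E$, and $w(E)=w(H)-10$ gives $h(E)=(w(E)-11)/2$ or $(w(E)-13)/2$ respectively. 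Combined with $\sum_{l\subseteq E}h(l)=h(E)$ from the second paragraph, this finishes the proof. The only step that requires any thought is the initial observation that the eleven ``generic'' codelines are skew from $U$; everything after it is just bookkeeping in the factor space.
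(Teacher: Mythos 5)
Your proof is correct and follows essentially the same route as the paper's: the eleven codelines other than $L_1,\dots,L_4$ miss $U$ and project bijectively onto lines of $\Pi_U$, each contributing one codepoint per point of its image, and the plane statements come from the hyperplane weight formula $w(H)=15+2i$. You are merely more explicit than the paper in justifying the skewness and the identification $\sum_{l\subseteq E}h(l)=h(E)$, but the underlying argument is identical.
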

\begin{proof} (1) This is immediate.

(3) This follows from
the fact that each of the remaining $11$ lines different from $L_1,\dots ,L_4$
whose image mod $U$ passes through $P\not=P_0$ contributes exactly one
codepoint to the weight of $P.$ When $P=P_0$ two of the codepoints
contributing to $w(P_0)$ come from line $L_4.$ This shows (2).

(4) The number of codelines contained in the preimage $H$ of $E$ is then $3+h(E).$ It follows
$$
|H \cap M|= 3(3+h(E))+(15-3-h(E))=2h(E)+21=10+w(E),
$$
which implies $h(E)=(w(E)-11)/2$, as claimed.

(5) In the case
when $P_0\in E$ the argument is analogous.
\end{proof}

In particular each plane has odd weight between $11$ and $17.$
In what follows, three cases are distinguished.

\begin{enumerate}
\item[] {\bf Case 1:} There is a plane of weight $11.$

\item[] {\bf Case 2:} There is a plane of weight $13.$

\item[] {\bf Case 3:} All planes have weight $15$ or $17.$
\end{enumerate}

In each case we attempt to construct a generator matrix $G$ of the code $C.$
Here $G$ has $10$ rows and $15$ pairs of columns. The column pairs correspond
to codelines, where each codeline is represented by two of its three points.
The top $6$ rows correspond to the parameters
$x_1,\dots ,x_6$ of $U,$ the remaining four rows to the parameters $y_1,y_2,y_3,y_4$
of $\Pi_U,$ where all those parameters are read from top to bottom.
In each case we start from our lines $L_1,L_2,L_3,L_4$ and a system of $11$
lines in $\Pi_U=PG(3,2)$ which determine the last $4$ rows of $G.$
A computer program then decides that this cannot be completed to a generator
matrix of a code with minimum distance $9.$

\section{Case 1}
Assume there is a plane $E_0$ of weight $11.$ Then all points in $\Pi_U\sm
E_0$ have weight $3.$ All points of $E_0$ have weight $1$ or $2$ as otherwise
a line of weight $9$ would exist. It follows that $E_0$ has $3$ points of
weight $1$ and $4$ points of weight $2,$ consequently $m_1=3,m_2=4,m_3=8.$ No
three points of weight $2$ can be collinear as otherwise some plane would have
even weight. It follows that the four points of weight $2$ form a quadrangle
in $E_0,$ and those of weight $1$ are collinear on a line $l_0.$ The planes
$E\not=E_0$ intersecting $E_0$ in $l_0$ have weight $15,$ the others have
weight $17.$ Write $E_0$ as $y_1=0.$ The points of weight $2$ are
$0100,0010,0001,0111,$ the remaining points of $E_0$ having weight $1$ form
the line $l_0=\lbrace 0110,0101,0011\rbrace$ and all affine points have weight
$3.$ As $w(E_0)=11,$ the preimage of $E_0$ contains $3$ codelines. It follows
that all lines of $E_0$ have $h$-weight $0$ and the point $P_0$ which defines
a $V_7$ with four lines is not on $E_0.$ We choose $P_0=1000$ as the
corresponding point, and $L_4=\langle v_1+v_3+v_5,0^61000\rangle .$ As
$w(P_0)=3$ there is a unique line $g_0$ of $h$-weight $1$ through $P_0.$ The
line $g_0$ through $P_0$ of $h$-weight $1$ can be chosen as either $P_0\cdot
0100$ or $P_0\cdot 0110.$

A computer calculation shows that there are $12$ solutions for the resulting
system of $11$ lines in $\Pi_U.$ In all cases $g_0=P_0\cdot 0110.$

\begin{center}
\begin{figure}
\label{Figure1}
\begin{center} \caption{Case 1, the first $6$ solutions} \end{center}

$$\begin{tabular}{|c|c|c|c|c|c|c|c|c|c|c|} \hline
$L_5$&$L_6$&$R_1$&$R_2$&$R_3$&$R_4$&$R_5$&$R_6$&$R_7$&$R_8$&$R_9$ \\ \hline
10 & 01 & 01 & 01 & 01 & 01 & 01 & 01 & 01 & 01 & 01 \\
01 & 01 & 01 & 11 & 01 & 00 & 11 & 11 & 11 & 11 & 00 \\
01 & 01 & 10 & 10 & 10 & 01 & 10 & 00 & 01 & 00 & 11 \\
00 & 10 & 01 & 10 & 00 & 10 & 11 & 10 & 01 & 01 & 10 \\ \hline
10 & 01 & 01 & 01 & 01 & 10 & 01 & 11 & 11 & 01 & 01 \\
01 & 01 & 01 & 01 & 00 & 11 & 00 & 10 & 10 & 11 & 11 \\
01 & 00 & 01 & 10 & 11 & 00 & 11 & 01 & 01 & 01 & 01 \\
00 & 10 & 10 & 00 & 01 & 01 & 10 & 10 & 10 & 01 & 01 \\ \hline
10 & 01 & 01 & 01 & 01 & 11 & 10 & 01 & 10 & 01 & 01 \\
01 & 01 & 01 & 01 & 00 & 10 & 11 & 00 & 11 & 11 & 11 \\
01 & 00 & 10 & 10 & 01 & 01 & 00 & 11 & 11 & 01 & 01 \\
00 & 10 & 01 & 00 & 10 & 10 & 01 & 10 & 01 & 01 & 01 \\ \hline
10 & 01 & 01 & 01 & 10 & 01 & 01 & 01 & 01 & 10 & 10 \\
01 & 01 & 10 & 00 & 11 & 11 & 00 & 01 & 01 & 11 & 11 \\
01 & 01 & 01 & 01 & 00 & 00 & 11 & 10 & 10 & 01 & 01 \\
00 & 10 & 00 & 10 & 01 & 01 & 10 & 01 & 01 & 01 & 01 \\ \hline
10 & 01 & 01 & 01 & 01 & 10 & 01 & 11 & 10 & 01 & 01 \\
01 & 01 & 01 & 01 & 10 & 11 & 00 & 10 & 11 & 11 & 00 \\
01 & 00 & 01 & 10 & 01 & 01 & 11 & 01 & 00 & 01 & 11 \\
00 & 10 & 10 & 01 & 00 & 01 & 01 & 10 & 01 & 01 & 10 \\ \hline
10 & 01 & 01 & 10 & 01 & 10 & 01 & 01 & 10 & 01 & 01 \\
01 & 01 & 10 & 11 & 00 & 11 & 11 & 00 & 11 & 01 & 01 \\
01 & 00 & 01 & 01 & 01 & 00 & 01 & 11 & 11 & 10 & 10 \\
00 & 10 & 00 & 01 & 10 & 01 & 01 & 10 & 01 & 01 & 01 \\ \hline
\end{tabular}$$

\end{figure}
\end{center}

%\clearpage

\begin{center}
\begin{figure}
\label{Figure2}
\begin{center} \caption{Case 1, the remaining $6$ solutions} \end{center}
$$\begin{tabular}{|c|c|c|c|c|c|c|c|c|c|c|} \hline
$L_5$&$L_6$&$R_1$&$R_2$&$R_3$&$R_4$&$R_5$&$R_6$&$R_7$&$R_8$&$R_9$ \\ \hline
10 & 01 & 01 & 01 & 01 & 10 & 01 & 10 & 01 & 11 & 11 \\
01 & 01 & 01 & 00 & 00 & 11 & 11 & 10 & 11 & 10 & 10 \\
01 & 01 & 10 & 01 & 11 & 00 & 01 & 01 & 00 & 01 & 01 \\
00 & 10 & 00 & 10 & 01 & 01 & 01 & 01 & 01 & 10 & 10 \\ \hline
10 & 01 & 01 & 11 & 10 & 01 & 10 & 01 & 10 & 01 & 01 \\
01 & 01 & 01 & 10 & 11 & 11 & 10 & 11 & 11 & 00 & 00 \\
01 & 10 & 10 & 01 & 00 & 01 & 01 & 00 & 11 & 01 & 01 \\
00 & 01 & 00 & 10 & 01 & 01 & 01 & 01 & 01 & 10 & 10 \\ \hline
10 & 01 & 01 & 01 & 10 & 01 & 01 & 11 & 10 & 10 & 01 \\
01 & 01 & 01 & 10 & 11 & 00 & 00 & 10 & 11 & 10 & 11 \\
01 & 01 & 10 & 01 & 01 & 01 & 11 & 01 & 00 & 01 & 00 \\
00 & 10 & 01 & 00 & 01 & 10 & 01 & 10 & 01 & 01 & 01 \\ \hline
10 & 01 & 10 & 10 & 10 & 01 & 10 & 01 & 01 & 01 & 01 \\
01 & 10 & 11 & 11 & 10 & 11 & 11 & 01 & 01 & 00 & 00 \\
01 & 01 & 01 & 00 & 01 & 00 & 11 & 10 & 10 & 01 & 01 \\
00 & 00 & 01 & 01 & 01 & 01 & 01 & 01 & 01 & 10 & 10 \\ \hline
10 & 01 & 01 & 01 & 10 & 01 & 10 & 01 & 01 & 11 & 11 \\
01 & 01 & 01 & 10 & 11 & 11 & 10 & 00 & 00 & 10 & 10 \\
01 & 00 & 01 & 01 & 00 & 01 & 01 & 11 & 11 & 01 & 01 \\
00 & 10 & 10 & 00 & 01 & 01 & 01 & 01 & 01 & 10 & 10 \\ \hline
10 & 01 & 01 & 01 & 01 & 01 & 11 & 10 & 01 & 10 & 10 \\
01 & 01 & 01 & 10 & 00 & 00 & 10 & 11 & 11 & 10 & 11 \\
01 & 00 & 10 & 01 & 01 & 11 & 01 & 00 & 01 & 01 & 11 \\
00 & 10 & 01 & 00 & 10 & 01 & 10 & 01 & 01 & 01 & 01 \\ \hline
\end{tabular}$$

\end{figure}
\end{center}

We illustrate with the first of these $12$ solutions. The generator matrix $G$
looks as follows.

$$\left[\begin{array}{c|c|c|c|c|c|c|c|c|c|c|c|c|c|c}
L_1&L_2&L_3&L_4&L_5&L_6&R_1&R_2&R_3&R_4&R_5&R_6&R_7&R_8&R_9 \\
10 & 00 & 00 & 10 &    &    &    &    &    &    &    &    &    &    &  \\
01 & 00 & 00 & 00 &    &    &    &    &    &    &    &    &    &    &  \\
00 & 10 & 00 & 10 &    &    &    &    &    &    &    &    &    &    &  \\
00 & 01 & 00 & 00 &    &    &    &    &    &    &    &    &    &    &   \\
00 & 00 & 10 & 10 &    &    &    &    &    &    &    &    &    &    &   \\
00 & 00 & 01 & 00 &    &    &    &    &    &    &    &    &    &    &   \\ \hline
00 & 00 & 00 & 01 & 10 & 01 & 01 & 01 & 01 & 01 & 01 & 01 & 01 & 01 & 01 \\
00 & 00 & 00 & 00 & 01 & 01 & 01 & 11 & 01 & 00 & 11 & 11 & 11 & 11 & 00 \\
00 & 00 & 00 & 00 & 01 & 01 & 10 & 10 & 10 & 01 & 10 & 00 & 01 & 00 & 11 \\
00 & 00 & 00 & 00 & 00 & 10 & 01 & 10 & 00 & 10 & 11 & 10 & 01 & 01 & 10   \\
\end{array}\right]$$

Use Gaussian elimination: as the bottom part of lines $L_5,R_1$ has full rank $4$
we can manage to have the upper part of those two lines identically zero.
This will not change the left column of $L_4,$ but the right column is destroyed.
The following form is obtained. Here we can choose the top entry of the right
column of $L_4$ to be zero as we could replace it by the sum of the columns.

$$\left[\begin{array}{c|c|c|l|c|c|c|c|c|c|c|c|c|c|c}
L_1&L_2&L_3&L_4&L_5&L_6&R_1&R_2&R_3&R_4&R_5&R_6&R_7&R_8&R_9 \\
10 & 00 & 00 & 10 & 00 &    & 00 &    &    &    &    &    &    &    &  \\
01 & 00 & 00 & 0  & 00 &    & 00 &    &    &    &    &    &    &    &  \\
00 & 10 & 00 & 1  & 00 &    & 00 &    &    &    &    &    &    &    &  \\
00 & 01 & 00 & 0  & 00 &    & 00 &    &    &    &    &    &    &    &   \\
00 & 00 & 10 & 1  & 00 &    & 00 &    &    &    &    &    &    &    &   \\
00 & 00 & 01 & 0  & 00 &    & 00 &    &    &    &    &    &    &    &   \\ \hline
00 & 00 & 00 & 01 & 10 & 01 & 01 & 01 & 01 & 01 & 01 & 01 & 01 & 01 & 01 \\
00 & 00 & 00 & 00 & 01 & 01 & 01 & 11 & 01 & 00 & 11 & 11 & 11 & 11 & 00 \\
00 & 00 & 00 & 00 & 01 & 01 & 10 & 10 & 10 & 01 & 10 & 00 & 01 & 00 & 11 \\
00 & 00 & 00 & 00 & 00 & 10 & 01 & 10 & 00 & 10 & 11 & 10 & 01 & 01 & 10   \\
\end{array}\right]$$

A computer search showed that this cannot be completed to a generator matrix
of the putative $[15,5,9]$-code. The same procedure excludes the remaining
$11$ solutions.

\section{Case 2}
Assume there is a plane $E_0$ of weight $13.$ By Lemma~\ref{w3lemma}, $E_0$
does not contain points of weight $3.$ It follows that $E_0$ contains a unique
point of weight $1,$ all remaining points having weight $2.$ Let $l\subset E$
be a line of type $(2,2,2).$ Then $l$ is contained in two $17$-planes, each
containing one further point of weight $2.$ It follows $m_1=1,m_2=8,m_3=6.$
Let $l'\subset E$ be a line of type $(1,2,2).$ Then $l'$ is on exactly one
$15$-plane, and this plane contains the two affine points (off $E$) of weight
$2.$ They are collinear with the weight $1$ point. This determines the
distribution of weights uniquely. There are three planes of weight $15$ and
eleven of weight $17.$ The latter come in three containing a weight $1$ point
and eight of the other type ($3$ points of weight $3$ and four of weight $2$).
Here is a concrete description: the weight $1$ point is $0100,$ the remaining
points on the weight $13$ plane $E_0=(y_1=0)$ have weight $2.$ The two
remaining points of weight $2$ are $1000,1100$ and the remaining six points
are of weight $3.$ The point $P_0$ which defines a $V_7$ with four lines must
have weight $\geq 2.$ It follows that we have without loss of generality three
subcases:

\begin{enumerate}
\item[] {\bf Subcase (2,1):} $P_0=1000$ (of weight $2,\notin E_0$)
\item[] {\bf Subcase (2,2):} $P_0=0010$ (of weight $2,\in E_0$)
\item[] {\bf Subcase (2,3):} $P_0=1010$ (of weight $3$)
\end{enumerate}

Subcase $(2,1)$ has $12$ solutions, subcase $(2,2)$ has $40$ solutions and subcase $(2,3)$ has $101$ solutions. In each of those $153$ cases a computer program checked that there is no completion to a generator matrix of a $[15,5,9]_4$-code.

\section{Case 3}
We have $m_1+m_2+m_3=15,~m_1+2m_2+3m_3=35$ which implies
$m_3=5+m_1.$ As lines have weight $<9,$ the points of weight $3$ form a cap in $PG(3,2).$
Observe also that each cap of size $6$ or more is contained in an affine space (avoids some plane). This shows that in case $m_1>0$ there is a
subplane without points of weight $3.$ Such a plane has weight $\leq 13,$ contradiction. We have $m_1=0,$ hence $m_2=10,~m_3=5.$
The only $5$-cap in $PG(3,2),$ which is not affine, consists
of $5$ points any four of which are in general position, a
coordinate frame which we choose as $1000, 0100, 0010, 0001, 1111.$
Those are the points of weight $3,$ all others have weight $2.$
There are $10$ planes of weight $17$ (containing three points of the frame)
and $5$ planes of weight $15$ (containing one point of the frame).
The point $P_0$ which describes a $V_7$ with four lines may have weight $2$ or $3.$ Observe that the stabilizer of the frame in $GL(4,2)$ is the full $S_5.$
This leads to two subcases as follows:

\begin{enumerate}
\item[] {\bf Subcase (3,1):} $P_0=1100$ (of weight $2$)
\item[] {\bf Subcase (3,2):} $P_0=1000$ (of weight $3$)
\end{enumerate}

There are $43$ solutions in Subcase $(3,1)$ and $70$ solutions in
Subcase $(3,2).$
In each of those cases a computer program checked that there is no completion to a generator matrix of a $[15,5,9]_4$-code.

We observe that in all three cases consideration of the tenth line of the generator matrix has never been necessary.
In other words, it never happened that the nine rows that have been chosen formed the generator matrix of a $[15,4.5,9]$-code.

\end{document}